\def\blfootnote{\xdef\@thefnmark{}\@footnotetext}
\newtheorem{thm}{Theorem}[section]
\newtheorem{lem}[thm]{Lemma}
\newtheorem{prop}[thm]{Proposition}
\newtheorem{defn}[thm]{Definition}
\newtheorem{rem}[thm]{Remark}
\newtheorem{thmA}{Theorem}
\newfont{\eufm}{eufm10}
\renewcommand{\phi}{\varphi}
\newcommand{\Z}{\mathbb Z}
\newcommand{\R}{\mathbb R}
\def\star {\mathrm{star}}
\def\link {\mathrm{link}}
\begin{document}

\title{Abelian splittings of right-angled Artin groups}
\author{Daniel Groves}
\address{Department of Mathematics, Statistics and Computer Science, University of Illinois at Chicago}
\email{groves@math.uic.edu}

\author{Michael Hull}
\address{Department of Mathematics, Statistics and Computer Science, University of Illinois at Chicago}
\email{mbhull@uic.edu}

\date{}
\maketitle

\begin{abstract}
We characterize when (and how) a Right-Angled Artin group splits nontrivially over an abelian subgroup.
\end{abstract}

Given a graph $\Gamma$, the associated \emph{Right-Angled Artin group} (abbreviated RAAG) $A(\Gamma)$ is the group given by the presentation
\[
A(\Gamma)=\langle V(\Gamma)\;|\; [u, v]=1 \text{ whenever } (u, v)\in E(\Gamma)\rangle.
\]

If $\Gamma$ has no edges, then $A(\Gamma)$ is a free group while if $\Gamma$ is a clique (i.e. a complete graph), then $A(\Gamma)$ is a free abelian group. Hence is natural to view the class of RAAGs as a class of groups which interpolates between free and free abelian groups. Additionally, RAAGs have become important in geometry and topology through the work of Haglund and Wise \cite{HW08,HaglundWise12} on (virtually) special cube complexes, which in turn played a key role in Agol's solution \cite{agol:virtualhaken} of the Virtual Haken Conjecture for 3-manifolds (and even more so in Agol's resolution of the Virtual Fibering Conjecture, using the RFRS condition). 

One particularly nice property of RAAGs is that many of the algebraic properties of $A(\Gamma)$ can be easily computed from the graph $\Gamma$. For example, $A(\Gamma)$ splits as a non-trivial free product if and only if $\Gamma$ is not connected and $\Gamma$ splits as a non-trivial direct product if and only if $\Gamma$ is a join. In a similar vein (though this is harder), Clay \cite{Clay} recently showed that for a connected $\Gamma$ which is not a single edge, $A(\Gamma)$ splits over a cyclic subgroup if and only $\Gamma$ has a cut-vertex. The goal of this paper is to generalize Clay's result to give a characterization of which RAAGs split nontrivially over an abelian subgroup in terms of the defining graph.

One reason to be interested in splittings of RAAGs over abelian subgroups is to study the (outer) automorphism group of a RAAG.  
There has been a lot of recent interest in automorphisms of RAAGs (for example, see \cite{CharneyVogtmann}, and also Vogtmann's lectures at MSJ-SI \cite{MSJ-SI}), partly in analogy with the study of $\mathrm{Out}(F_n)$.  One approach to studying $\mathrm{Out}(F_n)$ that proved quite fruitful is Sela's \cite{SelaNielsen}, where limiting actions on $\mathbb{R}$-trees and splittings of $F_n$ were studied.  The splittings in this case were over infinite cyclic subgroups.  For RAAGs, the `extension graph' built by Kim and Koberda \cite{KimKoberda} (see also Kim's talk at MSJ-SI \cite{MSJ-SI}) might be used to provide limiting $\mathbb{R}$-trees equipped with an action of a RAAG in an analogous way to Sela's approach.  Thus, it is of considerable interest to understand the ways in which a RAAG can split over an abelian subgroup.  We intend to investigate limiting actions of RAAGs on $\R$-trees arising in this way in a future paper.

As with free and cyclic splittings, there are a few cases where an abelian splitting can be directly seen in the graph $\Gamma$. If $\Gamma$ is disconnected then it splits as a free product and if $\Gamma$ is a complete graph on $n$ vertices then $A(\Gamma) \cong \Z^n$, which splits nontrivially over $\Z^{n-1}$.  Slightly more interestingly, if $K$ is a separating clique, then let $\Gamma \smallsetminus K = \Gamma_0 \sqcup \Gamma_1$ where $\Gamma_0$ and $\Gamma_1$ are each nonempty and share no vertices.  Let $\bar{\Gamma}_0 = \Gamma_0 \cup K$ and $\bar{\Gamma}_1 = \Gamma_1 \cup K$.  The defining presentation for $A(\Gamma)$ exhibits it as $A(\bar{\Gamma}_0) \ast_{A(K)} A(\bar{\Gamma}_1)$.  Note that $A(K)$ is free abelian, and properly contained in $A(\bar{\Gamma}_i)$, so this is a nontrivial abelian splitting of $A(\Gamma)$.

Our main theorem is that the only RAAGs which split over an abelian subgroup are those which have one of the obvious splittings mentioned above.

\begin{thmA} \label{mainthm}
 Suppose that $\Gamma$ is a finite simplicial graph.  Then the associated Right-Angled Artin group $A(\Gamma)$ splits nontrivially over an abelian group if and only if one of the following occurs:
\begin{enumerate}
 \item $\Gamma$ is disconnected;
 \item $\Gamma$ is a complete graph; or
 \item $\Gamma$ contains a separating clique.
\end{enumerate}
\end{thmA}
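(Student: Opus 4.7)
The ``if'' direction is immediate from the three explicit splittings given in the introduction, so the substance is the converse. Assume $\Gamma$ is finite, connected, not a clique, and has no separating clique; the goal is to show that $A(\Gamma)$ has no nontrivial splitting over an abelian subgroup. By Bass--Serre theory, such a splitting is equivalent to a minimal action of $G = A(\Gamma)$ on a simplicial tree $T$ with abelian edge stabilizers and no global fixed point, and the plan is to rule this out.

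Classify each vertex $v \in V(\Gamma)$ as elliptic (fixes a point of $T$) or hyperbolic. Commutation forces rigid constraints on the action: if $u, v$ are adjacent in $\Gamma$ and both elliptic, their fixed subtrees meet, so $\langle u, v \rangle$ has a common fixed point; if $v$ is hyperbolic with axis $\ell_v$, every $u$ adjacent to $v$ preserves $\ell_v$, and if such $u$ is itself hyperbolic then $u$ and $v$ share an axis (since $\langle u, v \rangle \cong \Z^2$ cannot act with independent translation directions on a tree). More generally, every clique $K \subseteq \Gamma$ generates a free abelian subgroup $A(K) \cong \Z^{|K|}$ which either fixes a vertex of $T$ or has a common invariant axis. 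These observations reduce the dynamical problem to a combinatorial one on $\Gamma$.

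The heart of the argument is an analysis of the set $H \subseteq V(\Gamma)$ of hyperbolic vertices and its frontier in $\Gamma$. The axis-sharing property forces $H$ to lie inside a single clique of $\Gamma$. The elliptic vertices adjacent to $H$ must then fix points on the common axis; a further argument -- using abelianness of edge stabilizers to forbid nonabelian joint dynamics along that axis -- should show that these boundary elliptic vertices also pairwise commute, hence form a clique. If $H \neq \emptyset$ then the set of vertices adjacent to $H$ would be a separating clique of $\Gamma$, unless $\Gamma = H \cup N(H)$ is itself a clique; both possibilities are excluded by hypothesis. If instead $H = \emptyset$, then all vertices are elliptic, and connectedness of $\Gamma$ combined with the Helly property for fixed-point subtrees in a tree yields a common fixed point for all vertex generators, contradicting nontriviality.

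The main obstacle is the last structural claim: that the frontier between the hyperbolic region $H$ and its complement in $\Gamma$ must in fact be a clique. This requires a careful analysis of parabolic closures of abelian subgroups -- since not every abelian subgroup of $A(\Gamma)$ is generated by a clique -- together with a precise use of the abelianness of edge stabilizers to eliminate configurations in which two elliptic neighbors of $H$ fail to commute. A helpful supplementary ingredient is the structural fact that every abelian subgroup of $A(\Gamma)$ is contained, up to conjugation and commensurability, in a parabolic subgroup $A(\star(v))$ for some vertex $v$, which is what ultimately allows one to translate the Bass--Serre dynamics back into the combinatorics of $\Gamma$ and invoke the no-separating-clique hypothesis.
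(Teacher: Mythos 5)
Your overall strategy — separate into the case where some defining generator acts hyperbolically and the case where all act elliptically — matches the paper's. The hyperbolic case is roughly on the right track: the paper's Lemma~\ref{l:hyp vert} shows directly that $\star(v)$ is a clique for any hyperbolic generator $v$ (via an elementary argument: two neighbors $u,w$ of $v$ have powers $u^{n_1}v^{n_2}$ and $w^{m_1}v^{m_2}$ that fix the axis of $v$ pointwise, hence stabilize a common edge, hence commute, forcing $[u,w]=1$), and then $\link(v)$ is immediately a separating clique. You do not need the heavier ``abelian subgroups are virtually parabolic'' machinery you invoke, and you acknowledge you have not actually supplied the argument that the frontier of the hyperbolic region is a clique; the paper's power-and-axis argument fills exactly that hole, and it is a genuine step rather than a routine verification.

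The serious error is in your elliptic case. You claim that if every generator is elliptic, then ``connectedness of $\Gamma$ combined with the Helly property for fixed-point subtrees in a tree yields a common fixed point.'' This is false. Helly's theorem for subtrees requires \emph{pairwise} intersection of the fixed-point sets, and Lemma~\ref{l:fix intersect} only guarantees $\mathrm{Fix}(u)\cap\mathrm{Fix}(v)\ne\emptyset$ when $u$ and $v$ are \emph{adjacent} in $\Gamma$. Since $\Gamma$ is assumed not to be a complete graph, there are non-adjacent (hence non-commuting) pairs, and their fixed trees may well be disjoint; connectedness of $\Gamma$ does nothing to repair this. Indeed the correct use of Helly is in the contrapositive direction: the absence of a global fixed point \emph{implies} some pair $x,y\in V(\Gamma)$ has $\mathrm{Fix}(x)\cap\mathrm{Fix}(y)=\emptyset$. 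From there the paper builds a continuous map $F:\Gamma\to T$ sending each vertex into its fixed tree and each edge to the geodesic between the images of its endpoints (well defined because adjacent vertices have intersecting fixed trees), picks a point $p$ interior to an edge of $T$ on the bridge between $\mathrm{Fix}(x)$ and $\mathrm{Fix}(y)$, and observes that $F^{-1}(p)$ is nonempty, separates $\Gamma$ because $p$ separates $T$, and is a clique because its vertices all stabilize the edge containing $p$ and that edge stabilizer is abelian. This construction is the crux of the all-elliptic case, and it is entirely missing from your proposal; without it you have no mechanism for producing a separating clique when no generator is hyperbolic.
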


\begin{rem}
The proof of Theorem \ref{mainthm} below gives a simpler proof of \cite[Theorem A]{Clay}.  In particular, if one reads this proof with the extra assumption that $A(\Gamma)$ splits nontrivially over $\Z$ then this proof will yield a cut vertex in $\Gamma$.  Clay indicated to us that he has a proof of Theorem \ref{mainthm} similar to the one we give below.
\end{rem}

In Section \ref{sec:JSJ} we then describe a `JSJ' decomposition which encodes (to a certain extent) all of the ways that a RAAG can split over an abelian subgroup.

The first author gratefully acknowledges the support of the NSF, and thanks the Mathematical Society of Japan for the opportunity to speak at the 7th Seasonal Institute of the MSJ.

\section{Proof of Theorem \ref{mainthm}}

\begin{lem} \label{l:fix intersect}
 Suppose that a group $G$ acts on a tree $T$ and that $g,h \in G$ are commuting elements which both act elliptically on $T$.  Then $\mathrm{Fix}(g) \cap \mathrm{Fix}(h) \ne \emptyset$.
\end{lem}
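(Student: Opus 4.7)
The plan is to produce a common fixed point by restricting the action of $h$ to $\mathrm{Fix}(g)$. I will use two standard facts about isometric actions on trees: (i) the fixed-point set of an elliptic isometry is a nonempty convex subtree, and (ii) every isometry of a tree is either elliptic (has a fixed point) or hyperbolic (translates along a unique invariant axis by a positive amount).

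Setting $F := \mathrm{Fix}(g)$, fact (i) ensures that $F$ is a nonempty subtree of $T$. Since $g$ and $h$ commute, for any $x \in F$ we have $g(h(x)) = h(g(x)) = h(x)$, so $h(x) \in F$; hence $h$ preserves $F$ setwise. The restriction $h|_F$ is again a tree isometry, to which (ii) applies. If $h$ were hyperbolic on $F$, then its translation axis would lie inside $F \subseteq T$ and $h$ would translate along it by a positive amount, making $h$ hyperbolic on $T$ and contradicting the hypothesis. Therefore $h$ acts elliptically on $F$, and invoking (i) for the restricted action produces a point of $F$ fixed by $h$, which is the desired element of $\mathrm{Fix}(g) \cap \mathrm{Fix}(h)$.

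If one prefers to avoid the elliptic/hyperbolic dichotomy, a direct construction also works: pick any $x \in F$ and any $p \in \mathrm{Fix}(h)$, and let $m$ be the median of $\{x, h(x), p\}$. Because $h$ maps the geodesic $[x,p]$ isometrically onto $[h(x),p]$ and fixes $p$, a short verification identifies $m$ with the midpoint of $[x,h(x)]$ and shows $h(m)=m$; meanwhile $m$ lies on $[x,h(x)]$, which is contained in the convex subtree $F$, so $m \in F$. In either form the only real subtlety is that a chosen fixed point $p$ of $h$ need not lie in $F$, which is why one cannot just use $p$ and must either restrict the action or perform a projection/median construction. Beyond that the lemma is essentially immediate, and I do not expect any serious obstacle.
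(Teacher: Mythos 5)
Your first argument and the paper's are close cousins: both start from the observation that commutativity forces $h$ to preserve $\mathrm{Fix}(g)$ (the paper phrases it symmetrically, with $g$ preserving $\mathrm{Fix}(h)$). Where the paper then examines the bridge $p$ between the two fixed subtrees and shows $g$ fixes it pointwise (so $p\subseteq\mathrm{Fix}(g)$, a contradiction if the subtrees were disjoint), you invoke the elliptic/hyperbolic dichotomy as a black box and conclude $h|_F$ must be elliptic. That is fine, but the step ``$h$ hyperbolic on $F$ implies $h$ hyperbolic on $T$'' deserves one more word: an isometry that translates a bi-infinite geodesic of $T$ by a positive amount has unbounded orbits, hence no fixed point, or alternatively one projects a putative fixed point onto $F$ and gets a fixed point of $h|_F$ --- the latter route is essentially the paper's bridge computation in disguise. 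Your median construction is the more genuinely different argument and arguably the cleanest: it avoids both the dichotomy and the bridge, manufacturing the common fixed point directly from $x\in\mathrm{Fix}(g)$, $h(x)$, and any $p\in\mathrm{Fix}(h)$ using only uniqueness of geodesics and convexity of fixed sets. Both of your routes are correct; the median one buys you a short self-contained proof, while the paper's bridge argument is the one that generalizes most transparently to $\mathbb{R}$-trees where ``no fixed point'' and ``positive translation length'' can come apart.
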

\begin{proof}
Since $g$ and $h$ commute $g$ leaves the subtree $\mathrm{Fix}(h)$ invariant.  However, it also fixes $\mathrm{Fix}(g)$, which means that if these trees are disjoint $g$ must fix (pointwise) the unique shortest path $p$ between $\mathrm{Fix}(g)$ and $\mathrm{Fix}(h)$.  This implies that in fact $p$ must be contained in $\mathrm{Fix}(g)$, which means that the fixed trees are not in fact disjoint.  
\end{proof}

Recall that given a vertex $v$ in a graph $\Gamma$, $\link(v)$ is the subgraph induced by $\{u\in V(\Gamma)\;|\; (v, u)\in E(\Gamma)\}$ and $\star(v)$ is the subgraph induced by $\link(v)\cup\{v\}$.

\begin{lem}\label{l:hyp vert}
If $A(\Gamma)$ acts on a tree with abelian edge stabilizers and $v\in V(\Gamma)$ is hyperbolic, then $\star(v)$ is a clique and $\{u\in \link(v)\;|\; u \text{ is elliptic}\}$ separates $v$ and $\Gamma\setminus \link(v)$.
\end{lem}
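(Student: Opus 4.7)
The plan is to exploit the hyperbolicity of $v$ via the pointwise stabilizer of its axis. Write $L = \mathrm{Axis}(v)$ and $F = \mathrm{Fix}(L)$; since $F$ sits inside the stabilizer of any edge of $L$, the hypothesis forces $F$ to be abelian. I would then build the proof on two facts. First, any $g \in A(\Gamma)$ that centralizes $v$ preserves $L$ (as $L$ is characterized as the axis of $v$) and acts on $L$ by an orientation-preserving translation --- a reflection would give $gvg^{-1} = v^{-1}$, hence $v^2 = 1$, contradicting hyperbolicity; in particular, any such $g$ that is elliptic lies in $F$. Second, in the RAAG $A(\Gamma)$, two standard generators commute iff they span an edge in $\Gamma$, and any two non-adjacent generators generate a subgroup isomorphic to $F_2$.

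To prove $\star(v)$ is a clique, I would take any $u_1, u_2 \in \link(v)$, suppose for contradiction that they are non-adjacent, and set $H := \langle u_1, u_2 \rangle \cong F_2$. Each $u_i$ centralizes $v$ and so acts on $L$ by a translation, giving a homomorphism $\rho \colon H \to \R$. The commutator subgroup $[H,H]$ is non-abelian but lies in $\ker(\rho)$, which in turn is contained in $F$. This forces a non-abelian subgroup into the abelian group $F$, a contradiction; hence $u_1$ and $u_2$ are adjacent, making $\link(v)$ --- and so $\star(v)$ --- a clique.

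For the separation statement, let $E$ denote the set of elliptic vertices in $\link(v)$. In $\Gamma \smallsetminus E$ the only neighbors of $v$ are the hyperbolic elements of $\link(v)$, so it suffices to show that every such hyperbolic $u$ satisfies $\link(u) \subseteq \star(v)$. Given any $w \in \link(u) \smallsetminus \star(v)$, the pair $v, w$ is non-adjacent in $\Gamma$, so $\langle v, w \rangle \cong F_2$. Because $u$ is hyperbolic and commutes with $v$, the axis of $u$ is also $L$; and $w$, as a centralizer of $u$, again acts on $L$ by a translation. Running the same $F_2 \to \R$ argument with $\langle v, w \rangle$ in place of $\langle u_1, u_2 \rangle$ produces a non-abelian subgroup inside the abelian $F$, giving the contradiction.

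The main subtlety is verifying that an element centralizing a hyperbolic isometry of a tree really must act on its axis as an orientation-preserving translation; after that, the core engine --- embedding a non-abelian free subgroup of the RAAG into the abelian pointwise-fixer $F$ --- does nearly all the work for both parts uniformly.
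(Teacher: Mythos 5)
Your proof is correct, and the underlying engine is the same as the paper's: centralizers of $v$ preserve the axis $L$, the pointwise stabilizer $F=\mathrm{Fix}(L)$ is abelian because it lies in an edge stabilizer, and non-adjacent vertices generate an $F_2$ whose non-abelianness must land in $F$. Where you differ is in how the contradiction is extracted. The paper picks integers $n_i,m_i$ so that the explicit elements $g=u^{n_1}v^{n_2}$ and $h=w^{m_1}v^{m_2}$ have zero net translation on $L$ and hence fix an edge of $L$; this implicitly uses commensurability of translation lengths on $L$ (automatic for a simplicial tree). You instead observe that the translation-length map $\rho\colon H\to\R$ kills the derived subgroup $[H,H]$, which is non-abelian in $F_2$ but forced into the abelian group $F$ --- this is a little cleaner and works verbatim for $\R$-trees. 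For the separation statement, the paper reruns the clique argument at any hyperbolic $v'\in\star(v)$ to get $\star(v')=\star(v)$, whereas you note directly that $\mathrm{Axis}(u)=L$ for hyperbolic $u\in\link(v)$ and reuse the same $F_2\to\R$ argument on $\langle v,w\rangle$ to show $\link(u)\subseteq\star(v)$; these come to the same thing (and both, yours and the paper's, leave to the reader the short induction showing the component of $v$ in $\Gamma\smallsetminus E$ stays inside $\star(v)$). You were right to flag the orientation-preservation point as the one thing requiring care; your argument for it (a reflection would force $gvg^{-1}$ to translate backward, contradicting $gvg^{-1}=v$) is correct.
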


\begin{proof}
Let $u, w\in \link(v)$. Since $u$ and $w$ commute with $v$, they must preserve the axis of $v$ setwise. Hence we can find integers $n_1, n_2, m_1, m_2$ with $n_1,m_1 \ne 0$ so that each of $g = u^{n_1}v^{n_2}$ and $h = w^{m_1}v^{m_2}$ fix the axis of $v$ pointwise, in particular the subgroup $\langle g, h\rangle$ stabilizes an edge of $T$. Thus $\langle g, h\rangle$ is abelian, which can only happen if $u$ and $w$ commute. This shows that $\star(v)$ is a clique. By the same proof, if $v^\prime\in \star(v)$ is hyperbolic, then $\star(v^\prime)$ is a clique and hence $\star(v^\prime)=\star(v)$. Thus, if $\Gamma\setminus \star(v)\neq\emptyset$, then $\Gamma\setminus\{u\in \link(v)\;|\; u \text{ is elliptic}\}$ is disconnected.
\end{proof}

\begin{proof} (Theorem \ref{mainthm})
Suppose that $\Gamma$ is connected and not a complete graph, and suppose that $A(\Gamma)$ acts on a tree $T$ with no global fixed points and abelian edge stabilizers. We use the action of $A(\Gamma)$ on $T$ to find a separating clique in $\Gamma$.

First, suppose that some $v\in V(\Gamma)$ acts hyperbolically on $T$. Then by Lemma \ref{l:hyp vert}, $\star(v)$ is a clique and hence not all of $\Gamma$. Then $\link(v)$ is a clique which separates $v$ from $\Gamma\setminus \star(v)$, so $\Gamma$ contains a separating clique.

Thus, we may suppose that each vertex of $\Gamma$ acts elliptically on $T$. Since the action has no global fixed point, it follows from Helly's theorem that for some $x, y\in V(\Gamma)$, $\mathrm{Fix}(x)\cap \mathrm{Fix}(y)=\emptyset$. Let $q$ be the shortest path between $\mathrm{Fix}(x)$ and $\mathrm{Fix}(y)$, and let $p$ be a point in the interior of an edge on $q$ which is not in $\mathrm{Fix}(x)$ or $\mathrm{Fix}(y)$. We define a map $F : \Gamma \to T$ as follows: first choose $a_x \in \mathrm{Fix}(x)$ and $a_y \in \mathrm{Fix}(y)$ and define $F(x) = a_x$ and $F(y) = a_y$. Now for vertices $u$ such that $p \in \mathrm{Fix}(u)$, define $F(u) = p$. For any other vertex $z$ of $\Gamma$, choose some $a_z \in \mathrm{Fix}(z)$ and define $F(z) = a_z$. For an edge $e$ of $\Gamma$ between vertices $u$ and $v$, let $F$ continuously map $e$ onto the (possibly degenerate) geodesic between $F(u)$ and $F(v)$. This defines a continuous map $F : \Gamma \to T$.
Lemma \ref{l:fix intersect} implies (for vertices $u$ and $v$ which are adjacent in $\Gamma$) that $\mathrm{Fix}(u) \cap \mathrm{Fix}(v) \ne \emptyset$, hence $F(e)\subseteq \mathrm{Fix}(u)\cup \mathrm{Fix}(v)$.  Consequently we have $F(\Gamma)\subseteq \bigcup_{v\in V(\Gamma)}\mathrm{Fix}(v)$.

Since $\Gamma$ is connected and $F$ is continuous, the image $F(\Gamma)$ is connected.  This implies that $p \in F(\Gamma)\subseteq \bigcup_{v\in V(\Gamma)}\mathrm{Fix}(v)$. Hence $F^{-1}(p)$ is a non-empty subgraph of $\Gamma$, and since $p$ separates $F(\Gamma)$ (since $T$ is a tree), $F^{-1}(p)$ separates $\Gamma$. The subgroup generated by the vertices of $F^{-1}(p)$ all fix the edge containing $p$, hence this subgroup is abelian and therefore $F^{-1}(p)$ is a separating clique.
\end{proof}

\section{Vertex-elliptic abelian JSJ decomposition}\label{sec:JSJ}

Suppose that $\Gamma$ is a finite connected graph.  In this section, we build a JSJ decomposition for $A(\Gamma)$, a decomposition which in some sense encodes all possible abelian splittings of $A(\Gamma)$. However, we first restrict our attention to those splittings in which each vertex of $\Gamma$ acts elliptically. The reason for this restriction is that actions with hyperbolic vertices all arise through simplicial actions of $\mathbb Z^n$ on a line, and such an action can be easily modified to make $\mathbb Z^n$ act elliptically. Indeed, suppose $A(\Gamma)$ acts on a tree $T$ with abelian edge stabilizers such that some $v\in V(\Gamma)$ acts hyperbolically. By Lemma \ref{l:hyp vert}, $\star(v)$ is a clique and hence $A(\star(v))\cong \mathbb Z^n$ for some $n\geq 1$. Let $\star(v)=\star_h(v)\sqcup\star_e(v)$, where each vertex of $\star_h(v)$ acts hyperbolically on $T$ and each vertex of $\star_e(v)$ acts elliptically. Let $\Gamma^\prime=\Gamma\setminus\star_h(v)$. By Lemma \ref{l:hyp vert}, $A(\Gamma)$ has the splitting
\[
A(\Gamma)=A(\Gamma^\prime)\ast_{A(\star_e(v))}A(\star(v)).
\]
Furthermore this splitting is compatible with the splitting of $A(\Gamma^\prime)$ induced by its action on $T$ since $A(\star_e(v))$ acts elliptically on $T$. Hence we get a new splitting of $A(\Gamma)$ such that $v$ is elliptic and the induced splitting of $A(\Gamma^\prime)$ is the same as the splitting induced by its action on $T$. Repeating this procedure, any abelian splitting of $A(\Gamma)$ can be replaced by a splitting in which all vertices act elliptically.   We call such a splitting a {\em vertex-elliptic abelian} splitting.

If $\Gamma$ is a complete graph, then there are no nontrivial vertex-elliptic abelian splittings of $A(\Gamma)$.  Therefore, we henceforth suppose that $\Gamma$ is both connected and not a complete graph.

One consequence of considering only vertex-elliptic actions is that HNN-extensions cannot arise: the corresponding graph of groups is a tree.
\begin{prop}
Let $A(\Gamma)$ act on a tree such that each vertex of $\Gamma$ acts elliptically. Then the corresponding graph of groups is a tree.
\end{prop}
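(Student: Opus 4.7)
The plan is to reduce the statement to a standard fact from Bass--Serre theory: if a group $G$ acts on a tree $T$ without inversions and $Y = T/G$ is the quotient graph (underlying the associated graph of groups), then the inclusion-induced homomorphism $G \twoheadrightarrow \pi_1(Y)$ has kernel equal to the subgroup generated by all vertex stabilizers of the $G$-action on $T$. In particular, $Y$ is a tree if and only if $G$ is generated by its vertex stabilizers.

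Granting this, the proof reduces to a one-line observation. By hypothesis every $v \in V(\Gamma)$ acts elliptically on $T$, so each generator $v$ of $A(\Gamma)$ is contained in the stabilizer of some vertex of $T$. Hence $A(\Gamma)$ is generated by its vertex stabilizers, and the standard fact above forces the quotient graph to be a tree.

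The only step requiring any care is recording (or citing) the Bass--Serre fact in a form that applies here. The clean way I would state and prove it is as follows: let $Y_0 \subseteq Y$ be a maximal subtree and realize $G$ as the fundamental group of the graph of groups over $Y$; collapsing each vertex group to the trivial group yields the fundamental group of $Y$, which is free of rank equal to the number of edges of $Y$ outside $Y_0$. The kernel of this quotient is visibly the normal closure (hence the subgroup, since vertex-stabilizer conjugates are again vertex stabilizers) generated by vertex stabilizers. Thus the number of edges of $Y$ outside a spanning tree equals zero precisely when vertex stabilizers generate $G$, which is the assertion required. No real obstacle arises beyond invoking this structural result, so the proposition follows with essentially no further work.
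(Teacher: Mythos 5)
Your proof is correct and follows essentially the same route as the paper: both use the natural surjection from $A(\Gamma)$ onto the fundamental group of the underlying quotient graph, observe that elliptic elements (equivalently, vertex stabilizers) lie in the kernel, and conclude that since the generators of $A(\Gamma)$ are elliptic the quotient graph has trivial fundamental group, i.e.\ is a tree. You simply spell out the Bass--Serre fact in a bit more detail than the paper does.
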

\begin{proof}
Let $\mathcal G$ be the corresponding graph of groups decomposition of $A(\Gamma)$, and let $G$ be the underlying graph of $\mathcal G$. Then there is a natural surjective map $A(\Gamma)\to\pi_1(G)$ which maps the elliptic elements of $A(\Gamma)$ to the identity. But $A(\Gamma)$ is generated by elliptic elements, so $\pi_1(G)=\{1\}$ which means that $G$ is a tree.
\end{proof}

The following lemma is an immediate consequence of Lemma \ref{l:fix intersect} and Helly's theorem.

\begin{lem} \label{l:clique elliptic}
 Suppose $A(\Gamma)$ acts on a tree $T$ such that each vertex of $\Gamma$ acts elliptically. Then for any clique $K$ in $\Gamma$, $A(K)$ acts elliptically on $T$.
\end{lem}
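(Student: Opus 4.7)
The plan is to apply Helly's theorem on trees to the collection of fixed subtrees $\{\mathrm{Fix}(v)\}_{v\in V(K)}$. Recall Helly's theorem for trees states that a finite family of pairwise-intersecting subtrees of a tree has nonempty common intersection.

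First I would note that since each $v\in V(K)$ acts elliptically, $\mathrm{Fix}(v)$ is a nonempty subtree of $T$. Since $K$ is a clique, any two vertices $u,v\in V(K)$ are joined by an edge in $\Gamma$ and hence commute in $A(\Gamma)$. Applying Lemma \ref{l:fix intersect} to each pair $(u,v)$, we conclude that $\mathrm{Fix}(u)\cap\mathrm{Fix}(v)\ne\emptyset$. So the finite collection $\{\mathrm{Fix}(v)\}_{v\in V(K)}$ consists of pairwise-intersecting subtrees.

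Helly's theorem then yields a point $x\in\bigcap_{v\in V(K)}\mathrm{Fix}(v)$. Since $A(K)$ is generated by $V(K)$ and each of these generators fixes $x$, the point $x$ is fixed by all of $A(K)$. Therefore $A(K)$ acts elliptically on $T$.

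There is no real obstacle here; the content is packaged entirely in Lemma \ref{l:fix intersect} (which handles pairs) and Helly's theorem (which passes from pairs to the whole finite family). The only thing to be mildly careful about is that $V(K)$ is finite (which is automatic since $\Gamma$ is finite), so Helly's theorem applies directly without any compactness issues.
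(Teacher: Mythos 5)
Your proof is correct and matches the paper's approach exactly: the paper states the lemma is ``an immediate consequence of Lemma \ref{l:fix intersect} and Helly's theorem,'' and you have simply filled in the routine details (pairwise intersection of fixed subtrees via Lemma \ref{l:fix intersect} for commuting clique vertices, then Helly to get a common fixed point).
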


We will use the language of Guirardel-Levitt \cite{GuiLevJSJI} in order to describe the vertex-elliptic abelian JSJ decomposition of $A(\Gamma)$. Given $A(\Gamma)$, let $\mathcal A$ be the set of abelian subgroups of $A(\Gamma)$ and $\mathcal H=\{\langle v\rangle\;|\; v\in V(\Gamma)\}$. If $A(\Gamma)$ acts  on a tree $T$ such that each edge stabilizer is abelian and each vetex of $\Gamma$ is elliptic, then $T$ is called an $(\mathcal A, \mathcal H)$-tree. An $(\mathcal A, \mathcal H)$-tree $T$ is called \emph{universally elliptic} if the edge stabilizers of $T$ are elliptic in every other $(\mathcal A, \mathcal H)$-tree. Also, given trees $T$ and $T^\prime$, we say $T$ \emph{dominates} $T^\prime$ if every vertex stabilizer of $T$ is elliptic with respect to $T^\prime$.

\begin{defn}
A graph of groups decomposition $\mathcal G$ of $A(\Gamma)$ is called a \emph{vertex-elliptic abelian JSJ-decomposition of $A(\Gamma)$} if the corresponding Bass-Serre tree $T$ is an $(\mathcal A, \mathcal H)$-tree  which is universally elliptic and which dominates every other universally elliptic $(\mathcal A, \mathcal H)$-tree.
\end{defn}

\begin{figure}
\centering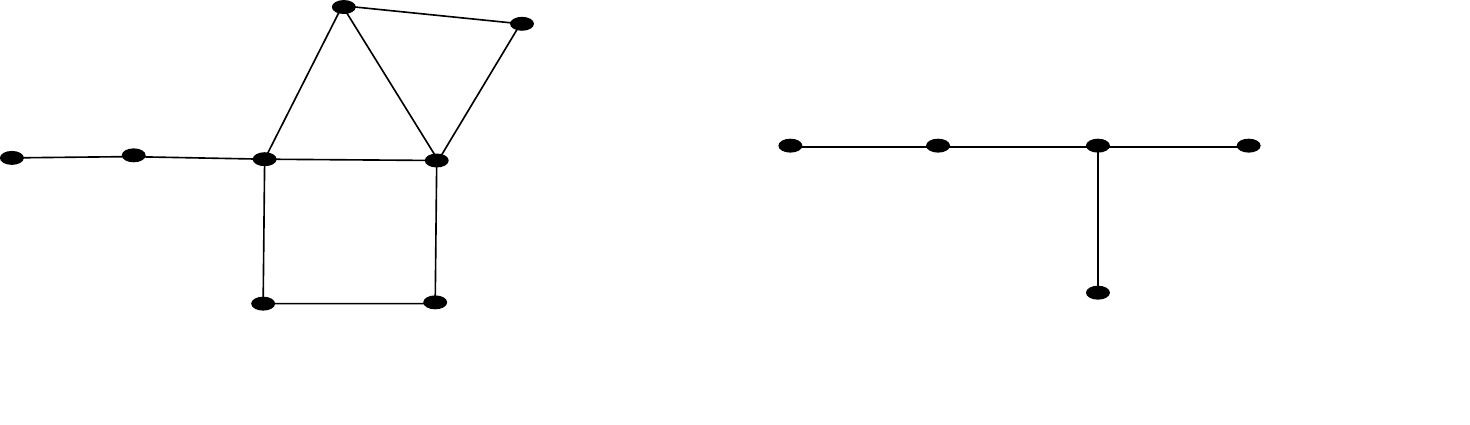\\
  \caption{A graph $\Gamma$ with the vertex-elliptic abelian JSJ-decomposition of $A(\Gamma)$.}\label{fig1}
\end{figure}

We will now build a vertex-elliptic abelian JSJ decomposition for the group $A(\Gamma)$.  If $\Gamma$ has no separating cliques then this JSJ decomposition is trivial. Otherwise, suppose that the minimal size of a separating clique in $\Gamma$ is $k$.  Let $K^1, \ldots , K^n$ be the collection of all separating cliques of size $k$.

Each $K^i$ defines a vertex-elliptic abelian splitting of $A(\Gamma)$ as follows. The underlying graph of the splitting is a tree with a valence-one vertex for each component $\Gamma^{i,j}$ of $\Gamma \smallsetminus K^i$, all adjacent to single central vertex. The vertex corresponding to $\Gamma^{i,j}$ has associated vertex group $A(\Gamma^{i,j} \cup K^i)$, while the central vertex has vertex group $A(K^i)$.  All of the edge groups correspond to $A(K^i)$ in the natural way.  In case there are only two components of $\Gamma \smallsetminus K^i$, this is a reducible two-edge splitting, which can be transformed into a one-edge splitting in the obvious way.

By Lemma \ref{l:clique elliptic} each $A(K^j)$ is elliptic in the splitting defined by $K^i$.  Therefore, the splittings with edge groups $A(K^i)$ can be refined into a single (vertex-elliptic abelian) splitting $\Lambda$ of $A(\Gamma)$.  The vertex groups of $\Lambda$ naturally correspond to connected subgraphs of $\Gamma$.
For each such subgraph $\Gamma^i$ we can repeat this procedure with any separating cliques of minimal size that it has (they are necessarily of size larger than $k$). Since we are only considering vertex-elliptic splittings,  Lemma \ref{l:clique elliptic} implies that any such splitting of $A(\Gamma^i)$ can be extended to a vertex-elliptic abelian splitting of $A(\Gamma)$ in an obvious way.

Eventually, we find a vertex-elliptic abelian splitting $\mathcal G$ of $A(\Gamma)$ so that the subgraphs associated to the vertex groups are connected and contain no separating cliques.
See Figure \ref{fig1} for an example where one edge adjacent to each reducible valence two vertex has been contracted.

\begin{thm}
$\mathcal G$ is a vertex-elliptic abelian JSJ-decomposition of $A(\Gamma)$.
\end{thm}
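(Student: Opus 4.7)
The plan is to verify the three conditions in the definition of a vertex-elliptic abelian JSJ decomposition for the Bass--Serre tree $T$ of $\mathcal G$: (i) $T$ is an $(\mathcal A,\mathcal H)$-tree; (ii) $T$ is universally elliptic; and (iii) $T$ dominates every other universally elliptic $(\mathcal A,\mathcal H)$-tree. Condition (i) is built into the construction of $\mathcal G$: every edge group has the form $A(K)$ for a clique $K\subseteq\Gamma$ and so lies in $\mathcal A$, while every $v\in V(\Gamma)$ lies in some vertex subgraph of $\mathcal G$ and therefore acts elliptically on $T$. For condition (ii), if $T'$ is any $(\mathcal A,\mathcal H)$-tree, then each edge stabilizer of $T$ is $A(K)$ for a clique $K$, and Lemma \ref{l:clique elliptic} immediately gives that $A(K)$ fixes a point of $T'$.

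The main content is condition (iii), and I expect this to be the only substantive step; it reduces to an application of Theorem \ref{mainthm}. Let $T'$ be any $(\mathcal A,\mathcal H)$-tree; I will show that every vertex group of $\mathcal G$ is elliptic in $T'$, which is stronger than what is required for domination. By construction, each vertex group of $\mathcal G$ is of the form $A(H)$ where $H\subseteq\Gamma$ is a connected subgraph with no separating clique: either $H$ is a separating clique $K^i$ occupying the central vertex of the star used to split along $K^i$, or $H$ is a terminal piece of the iterative splitting procedure. If $H$ is itself a clique (in particular for the central vertices, and for any terminal piece which happens to be complete), then Lemma \ref{l:clique elliptic} directly yields that $A(H)$ is elliptic on $T'$. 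Otherwise $H$ is connected, not complete, and contains no separating clique, so Theorem \ref{mainthm} implies that $A(H)$ admits no nontrivial splitting over an abelian subgroup.

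To finish in this remaining case, consider the restriction of the $A(\Gamma)$-action on $T'$ to the subgroup $A(H)$. Its edge stabilizers are intersections of $A(H)$ with the abelian edge stabilizers of $T'$, hence are abelian. If $A(H)$ failed to have a global fixed point on $T'$, then passing to a minimal $A(H)$-invariant subtree and applying Bass--Serre theory would produce a nontrivial splitting of $A(H)$ over an abelian subgroup, contradicting Theorem \ref{mainthm}. Therefore $A(H)$ fixes a point of $T'$, which completes the verification of domination. The only real obstacle is condition (iii), and its resolution is essentially an invocation of the main theorem of the previous section, combined with the standard correspondence between actions without global fixed points and nontrivial splittings.
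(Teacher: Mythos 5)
Your proof is correct and follows essentially the same route as the paper's: universal ellipticity via Lemma \ref{l:clique elliptic}, and domination by invoking Theorem \ref{mainthm} for the non-clique vertex groups (and Lemma \ref{l:clique elliptic} again for the clique ones). The only difference is that you spell out two details the paper leaves implicit --- the verification that $T$ is an $(\mathcal A,\mathcal H)$-tree, and the standard Bass--Serre reduction from ``$A(H)$ has no global fixed point on $T'$'' to ``$A(H)$ splits nontrivially over an abelian subgroup.''
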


\begin{proof}
Let $T$ be the Bass-Serre tree corresponding to $\mathcal G$. Note that, up to conjugation, each edge stabilizer in $T$ is of the form $A(K)$, where $K$ is a clique inside of $\Gamma$. Suppose $A(\Gamma)$ acts on another $(\mathcal A, \mathcal H)$-tree $T^\prime$. Then by Lemma \ref{l:clique elliptic}, for every clique $K$ of $\Gamma$, $A(K)$ acts elliptically on $T^\prime$, hence $T$ is universally elliptic. 

Next we show that for any $(\mathcal A,\mathcal H)$-tree $T^\prime$, $T$ dominates $T^\prime$. Up to conjugation, each vertex stabilizer of $T$ is of the form $A(\Gamma^\prime)$, where $\Gamma^\prime$ is a connected subgraph of $\Gamma$  which has no separating cliques. If $\Gamma^\prime$ is a clique, then $A(\Gamma^\prime)$ acts elliptically on $T^\prime$ by the previous paragraph. If $\Gamma^\prime$ is not a clique, then $A(\Gamma^\prime)$ must act elliptically on $T^\prime$ otherwise this action would violate Theorem \ref{mainthm}. 
\end{proof}

\small
\bibliographystyle{alpha} 

\def\cprime{$'$}

\end{document}